\def\marker{\>\hbox{${\vcenter{\vbox{
    \hrule height 0.4pt\hbox{\vrule width 0.4pt height 6pt
    \kern6pt\vrule width 0.4pt}\hrule height 0.4pt}}}$}\>}
\newtheorem{theorem}{Theorem} %Defines \begin{theorem} to write "Theorem"
\newtheorem{theorem*}{Theorem} %Defines \begin{theorem} to write "Theorem"
\newtheorem{lemma}[theorem]{Lemma}
\theoremstyle{definition}
\theoremstyle{remark}
\newcommand{\CL}[1]{\left\lceil #1 \right\rceil}
\newcommand{\FL}[1]{\left\lfloor #1 \right\rfloor}
\title{Rainbow Matchings of Size $\delta(G)$ in Properly Edge-Colored Graphs}
\author{Jennifer Diemunsch\footnotemark[1]\,\,\footnotemark[3]\quad
Michael Ferrara\footnotemark[1]\,\,\footnotemark[4],\quad
Casey Moffatt\footnotemark[1],\quad\\
Florian Pfender\footnotemark[2],\quad
and Paul S.\ Wenger\footnotemark[1]}
\begin{document}

\maketitle

\begin{abstract}
A {\it rainbow matching} in an edge-colored graph is a matching in which all the edges have distinct colors.
Wang asked if there is a function $f(\delta)$ such that a properly edge-colored graph $G$ with minimum degree $\delta$ and order at least $f(\delta)$ must have a rainbow matching of size $\delta$.
We answer this question in the affirmative; $f(\delta) = 6.5\delta$ suffices.
Furthermore, the proof provides a $O(\delta(G)|V(G)|^2)$-time algorithm that generates such a matching.

{\bf Keywords:} Rainbow matching, properly edge-colored graphs
\end{abstract}

\renewcommand{\thefootnote}{\fnsymbol{footnote}}
\footnotetext[1]{Dept.\ of Mathematical and Statistical Sciences, Univ.\ of Colorado Denver, Denver, CO; email addresses
{\tt jennifer.diemunsch@ucdenver.edu}, {\tt michael.ferrara@ucdenver.edu}, {\tt casey.moffatt@ucdenver.edu},
{\tt paul.wenger@ucdenver.edu}.}
\footnotetext[2]{
Institut f$\ddot{\text u}$r Mathematik, Univ. Rostock, Rostock, Germany;
{\tt Florian.Pfender@uni-rostock.de}.}
\footnotetext[3]{Research supported in part by UCD GK12 Transforming Experiences Project, NSF award \# 0742434.}
\footnotetext[4]{Research supported in part by Simons Foundation Collaboration Grant \# 206692.}
\renewcommand{\thefootnote}{\arabic{footnote}}

\baselineskip18pt

\section{Introduction}

All graphs under consideration in this paper are simple, and we let $\delta(G)$ and $\Delta(G)$ denote the minimum and maximum degree of a graph $G$, respectively.  A {\it rainbow subgraph} in an edge-colored graph is a subgraph in which all edges have distinct colors.  Rainbow matchings are of particular interest given their connection to transversals of Latin squares: each Latin square can be converted to a properly edge-colored complete bipartite graph, and a transversal of the Latin square is a perfect rainbow matching in the graph.
Ryser's conjecture~\cite{Ryser} that every Latin square of odd order has a transversal can be seen as the beginning of the study of rainbow matchings.  Stein~\cite{Stein} later conjectured that every Latin square of order $n$ has a transversal of size $n-1$; equivalently every properly edge-colored $K_{n,n}$ has a rainbow matching of size $n-1$.  The connection between Latin transversals and rainbow matchings in $K_{n,n}$ has inspired additional interest in the study of rainbow matchings in triangle-free graphs.

Several results have been attained for rainbow matchings in arbitrarily edge-colored graphs.
The {\it color degree} of a vertex $v$ in an edge-colored graph $G$, written $\hat d(v)$, is the number of different colors on edges incident to $v$.
We let $\hat \delta (G)$ denote the minimum color degree among the vertices in $G$.
Wang and Li~\cite{WL} proved that every edge-colored graph $G$ contains a rainbow matching of size at least $\CL{\frac{5\hat\delta(G)-3}{12}}$, and conjectured that $\CL{\hat\delta(G)/2}$ could be guaranteed when $\hat \delta(G)\ge 4$.
LeSaulnier et al.~\cite{LSWW} then proved that every edge-colored graph $G$ contains a rainbow matching of size $\FL{\hat\delta(G)/2}$.
Finally, Kostochka and Yancey~\cite{KY} proved the conjecture of Wang and Li in full, and also that triangle-free graphs have rainbow matchings of size $\CL{\frac{2\hat\delta(G)}{3}}$.

Since the edge-colored graphs generated by Latin squares are properly edge-colored, it is of interest to consider rainbow matchings in properly edge-colored graphs.
In this direction, LeSaulnier et al.\ proved that a properly edge-colored graph $G$ satisfying $|V(G)|\neq \delta(G)+2$ that is not $K_4$ has a rainbow matching of size $\CL{\delta(G)/2}$.
Wang then asked if there is a function $f$ such that a properly edge-colored graph $G$ with minimum degree $\delta$ and order at least $f(\delta)$ must contain a rainbow matching of size $\delta$~\cite{Wang}.
As a first step towards answering this question, Wang showed that a graph $G$ with order at least $\frac{8\delta}{5}$ has a rainbow matching of size $\FL{\frac{3\delta(G)}{5}}$.

In this paper we answer Wang's question from \cite{Wang} in the affirmative.
\begin{theorem}\label{main}
If $G$ is a properly edge-colored graph satisfying $|V(G)|>\frac{13}{2}\delta-\frac{23}{2}+\frac{41}{8\delta}$, then $G$ contains a rainbow matching of size $\delta(G)$.
\end{theorem}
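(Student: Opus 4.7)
The plan is to take a maximum rainbow matching $M = \{e_1, \dots, e_k\}$ with $e_i = x_iy_i$ of color $c_i$, assume for contradiction that $k < \delta(G)$, and use the lower bound on $|V(G)|$ to force an augmentation. Setting $U = V(G) \setminus V(M)$, the hypothesis gives $|U| > \frac{13}{2}\delta - \frac{23}{2} + \frac{41}{8\delta} - 2k$, which is at least roughly $\frac{9}{2}\delta$ when $k = \delta - 1$.

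First I would record the basic structure. For every $u \in U$, the $\delta$ distinct colors incident to $u$ include at most $k < \delta$ colors from $M$, so $u$ has at least $\delta - k \geq 1$ edges whose color does \emph{not} appear in $M$; call these \emph{free} edges. Each free edge at $u$ must end in $V(M)$, for otherwise $M$ could be extended to a larger rainbow matching. Dually, every edge from $u$ to $U$ carries an $M$-color, bounding the number of $U$-neighbors of $u$ by $k$.

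The cleanest augmentation is the following: if some $e_i$ admits distinct $u, u' \in U$ with $ux_i$ and $u'y_i$ both free, then $(M \setminus \{e_i\}) \cup \{ux_i, u'y_i\}$ is a rainbow matching of size $k + 1$, since proper coloring forces the two new edges to have distinct colors and both colors lie outside $M$. Letting $X_i = \{u \in U : ux_i \text{ is free}\}$ and $Y_i = \{u \in U : uy_i \text{ is free}\}$, maximality forces, for every $i$, one of $X_i = \emptyset$, $Y_i = \emptyset$, or $X_i = Y_i = \{u_i\}$ for a single vertex $u_i$.

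The heart of the proof, and what I expect to be the main obstacle, is handling the cases where this direct augmentation fails. Here one must exploit the $M$-colored edges from $U$ to $V(M)$: whenever $ux_i$ has color $c_j$ (necessarily $j \neq i$ by properness), one may try to swap out both $e_i$ and $e_j$ while adding $ux_i$ together with one or two free edges that recover exposed endpoints among $\{y_i, x_j, y_j\}$. Ruling out every such two-step swap imposes further constraints on where the free and the $M$-colored edges from $U$ to $V(M)$ can land. Finally one double-counts: the lower bound $\sum_i (|X_i| + |Y_i|) \geq |U|(\delta - k)$ together with an analogous count on $M$-colored $U$-to-$V(M)$ edges must contradict the upper bounds imposed by non-augmentability, and this contradiction materialises precisely when $|V(G)|$ exceeds $\frac{13}{2}\delta - \frac{23}{2} + \frac{41}{8\delta}$. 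The explicit constant $13/2$ is calibrated to exactly close this double-count gap.
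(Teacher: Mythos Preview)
Your approach is fundamentally different from the paper's: the paper does not analyse a maximum rainbow matching at all. It proceeds by induction on $\delta$, first reducing (via two short lemmas) to the case $\Delta(G)\le 3\delta-3$ and every color class of size at most $2\delta-2$, then preprocessing so that every edge meets a minimum-degree vertex, and finally running a greedy algorithm that at each step deletes a smallest remaining color class together with all edges touching a carefully chosen edge of that color. The constant $13/2$ emerges from comparing an upper bound on $|E(G)|$ (total edges removed by the algorithm) with a lower bound coming from degree sums, sharpened by a system-of-distinct-representatives argument; a quadratic in an auxiliary parameter $t$ is then optimised.

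Your sketch, by contrast, has real gaps. First, the sentence ``proper coloring forces the two new edges to have distinct colors'' is false: $ux_i$ and $u'y_i$ share no endpoint when $u\ne u'$, so they may well carry the same free color, and then $(M\setminus\{e_i\})\cup\{ux_i,u'y_i\}$ is not rainbow. The trichotomy you assert for $(X_i,Y_i)$ therefore does not follow. Second, even granting some corrected version of that trichotomy, you give no upper bound on $\sum_i(|X_i|+|Y_i|)$: the case $Y_i=\emptyset$ places no constraint on $|X_i|$, which could be as large as the degree of $x_i$, so the double count you announce does not close without substantial further structure. Third, the entire two-step swap analysis and the final counting are left as assertions; saying the constant ``is calibrated to exactly close this double-count gap'' is not a proof, and nothing in the outline suggests why $13/2$ rather than some other (or no finite) constant should appear. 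Augmentation arguments of this flavor do exist for rainbow-matching problems, but to date they yield weaker multiplicative constants; getting $13/2$ this way would require a genuinely new idea that your proposal does not supply.
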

\noindent If $G$ is triangle-free, a smaller order suffices.
\begin{theorem}\label{trianglefree}
If $G$ is a triangle-free properly edge-colored graph satisfying $|V(G)|>\frac{49}{8}\delta-\frac{21}{2}+\frac{9}{2\delta}$, then $G$ contains a rainbow matching of size $\delta(G)$.
\end{theorem}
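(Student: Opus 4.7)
\medskip
\noindent\textbf{Proof proposal.} The plan is to argue by contradiction and mirror the rotation strategy underlying Theorem~\ref{main}, inserting the savings afforded by triangle-freeness at each step. Suppose $G$ admits no rainbow matching of size $\delta$, and let $M=\{e_1,\dots,e_k\}$ with $e_i=x_iy_i$ be a maximum rainbow matching; then $k\le\delta-1$. Set $U=V(G)\setminus V(M)$, so $|U|=|V(G)|-2k$. Because the coloring is proper, each $u\in U$ meets $d(u)\ge\delta$ distinct colors, and at most $k$ of them lie in $c(M)$, so $u$ is incident to at least $\delta-k$ edges whose colors avoid $M$; maximality of $M$ forces every such edge to land in $V(M)$. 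Call these \emph{augmenting edges}, and let $A$ denote their set, so $|A|\ge(\delta-k)|U|$. The first use of triangle-freeness is immediate: if some $u\in U$ were adjacent to both endpoints of an edge $e_i\in M$, then $u,x_i,y_i$ together with $e_i$ would form a triangle, so each $u$ contributes at most one augmenting edge per $e_i$.

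The main structural step is a rotation. Given an augmenting edge $ux_i\in A$, the matching $M'=(M\setminus\{e_i\})\cup\{ux_i\}$ is again a maximum rainbow matching, and $y_i$ is unsaturated by $M'$; its maximality forces every edge from $y_i$ to an $M'$-unsaturated vertex to use a color of $M'$. Triangle-freeness kicks in again: $uy_i\notin E(G)$, so the $M'$-unsaturated neighbors of $y_i$ coincide with $N(y_i)\cap U$, and the edges from $y_i$ into $U$ carry pairwise distinct colors drawn from the $k$-element palette $(c(M)\setminus\{c(e_i)\})\cup\{c(ux_i)\}$. Hence $|N(y_i)\cap U|\le k$. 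Moreover, if two distinct augmenting edges $u_1x_i,u_2x_i$ both meet $x_i$, intersecting the two palettes (which differ because proper edge-coloring gives $c(u_1x_i)\ne c(u_2x_i)$) forces every edge from $y_i$ into $U$ to have color in $c(M)\setminus\{c(e_i)\}$, tightening the bound to $|N(y_i)\cap U|\le k-1$. Analogous statements hold when augmenting edges arrive at both endpoints of $e_i$, pinning down both $|N(x_i)\cap U|$ and $|N(y_i)\cap U|$.

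The proof concludes by a weighted double count. Writing $a_i^x,a_i^y$ for the number of augmenting edges landing on $x_i$ and $y_i$, one has $|A|=\sum_i(a_i^x+a_i^y)$. Partition the edges of $M$ according to which of $a_i^x,a_i^y$ vanish, apply to each class the bounds on $d_U(x_i),d_U(y_i)$ from the previous paragraph, and combine with $|A|\ge(\delta-k)|U|$ to obtain an upper bound on $|V(G)|$ as a function of $\delta$ and $k$. Optimizing over $k\in\{1,\dots,\delta-1\}$ should yield $|V(G)|\le\frac{49}{8}\delta-\frac{21}{2}+\frac{9}{2\delta}$, contradicting the hypothesis. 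The main obstacle is making the double count tight enough to reach $\frac{49}{8}$ rather than the $\frac{13}{2}$ of Theorem~\ref{main}; the savings of $\tfrac{3}{8}\delta$ between the two bounds must be extracted precisely from the two triangle-free reductions above (each augmenting pair $(u,e_i)$ counted once instead of twice, and $y_i$'s neighborhood in $U$ unaffected by the rotation), with the most delicate piece of bookkeeping being the case of edges $e_i$ that receive augmenting edges on only one side, where the cap on $d_U$ applies to the opposite endpoint but not to the "heavy" one.
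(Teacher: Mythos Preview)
Your proposal takes a fundamentally different route from the paper, and it also has a genuine gap.

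First, the paper's proof of Theorem~\ref{main} is \emph{not} a rotation argument on a maximum rainbow matching. It is an inductive greedy algorithm: after preprocessing so that every edge meets a vertex of degree $\delta$, one repeatedly selects a smallest remaining color class, picks from it an edge of minimum degree-sum, and deletes that edge's color class together with all edges incident to its endpoints. The analysis tracks the size $c_i$ of the chosen color class, the excess $\mu_i=\max\{0,d_i(x_i)+d_i(y_i)-2\delta\}$, and the number $f_i$ of color-$i$ edges lying entirely outside the matching, and combines upper and lower bounds on $|E(G)|$ via a system-of-distinct-representatives argument. The proof of Theorem~\ref{trianglefree} is then a one-line modification: triangle-freeness improves Lemma~\ref{maxdeg} from $\Delta(G)\le 3\delta-3$ to $\Delta(G)\le 2\delta-2$ (a maximum-degree vertex can touch each edge of a size-$(\delta-1)$ matching at most once), which sharpens the cap on $\mu_i$ from $2\delta-3$ to $\delta-2$; plugging this into the same inequalities replaces $\tfrac{13}{2}$ by $\tfrac{49}{8}$.

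Second, your rotation/double-count sketch, as written, does not appear to reach a linear bound at all. Your upper bound on $|A|$ comes from $\sum_i\bigl(d_U(x_i)+d_U(y_i)\bigr)$, and the rotation lemma caps each summand by roughly $k$. That gives $|A|\le O(k^2)$, while your lower bound is $|A|\ge(\delta-k)|U|$. At the critical value $k=\delta-1$ this yields only $|U|\le O(\delta^2)$, hence $n\le O(\delta^2)$, far from $\tfrac{49}{8}\delta$. The two triangle-free savings you identify (one augmenting edge per pair $(u,e_i)$, and $uy_i\notin E(G)$) are correct observations, but they shave constants off a quadratic bound rather than converting it to a linear one. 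To get a linear bound by rotation methods one needs substantially more structure (iterated rotations, control over where freed vertices send their non-matching-colored edges, or degree bounds of the sort the paper obtains via Lemma~\ref{maxdeg}); none of that is present in the outline, and your own hedging (``should yield'', ``main obstacle'') signals that the decisive step is missing.
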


The proofs of Theorems~\ref{main} and~\ref{trianglefree} depend on the implementation of a greedy algorithm, a significantly different approach than those found in~\cite{KY},~\cite{LSWW},~\cite{Wang}, and~\cite{WL}.
This algorithm generates a rainbow matching in a properly edge-colored graph $G$ in $O(\delta(G)|V(G)|^2)$-time.

Since there are $n\times n$ Latin squares with no transversals (see~\cite{BR}) when $n$ is even, it is clear that $f(\delta)>2\delta$ when $\delta$ is even.
Furthermore, since maximum matchings in $K_{\delta,n-\delta}$ have only $\delta$ edges (provided $n\ge 2\delta$), there is no function for the order of $G$ depending on $\delta(G)$ that can guarantee a rainbow matching of size greater than $\delta$.

\section{Proof of the Main Results}

\begin{proof}[Proof of Theorem~\ref{main}]

We proceed by induction on $\delta(G)$. The result is trivial if $\delta(G)= 1$.  We assume that $G$ is a graph with minimum degree $\delta$ and order greater than $\frac{13}{2}\delta-\frac{23}{2}+\frac{41}{8\delta}$.

\begin{lemma}\label{maxdeg}
If $G$ satisfies $\Delta(G)>3\delta-3$, then $G$ has a rainbow matching of size $\delta$.
\end{lemma}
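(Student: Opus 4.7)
The plan is to leverage the induction on $\delta$ that drives the proof of Theorem~\ref{main}: by the time we must prove Lemma~\ref{maxdeg} at minimum degree $\delta$, we may assume Theorem~\ref{main} holds for all properly edge-colored graphs of smaller minimum degree. Let $v \in V(G)$ be a vertex with $d(v) \geq 3\delta - 2$. I will choose a neighbor $u \in N(v)$, set $c = c(uv)$, and let $G'$ be obtained from $G$ by deleting $u$, $v$, and every edge of color $c$. If I can arrange that $\delta(G') \geq \delta - 1$ and $|V(G')|$ still satisfies the order bound of Theorem~\ref{main} with parameter $\delta - 1$, the induction hypothesis produces a rainbow matching $M'$ of size $\delta - 1$ in $G'$, and then $M' \cup \{uv\}$ is a rainbow matching of size $\delta$ in $G$, since $\{u, v\} \cap V(M') = \varnothing$ and $c \notin c(M')$.

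The order condition is routine: $|V(G')| = |V(G)| - 2$, and the inequality
$$\tfrac{13}{2}\delta - \tfrac{23}{2} + \tfrac{41}{8\delta} - 2 \geq \tfrac{13}{2}(\delta - 1) - \tfrac{23}{2} + \tfrac{41}{8(\delta - 1)}$$
simplifies to $\delta(\delta - 1) \geq \tfrac{41}{36}$, which holds for $\delta \geq 2$ (and $\delta = 1$ is the base case of the outer induction).

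The main obstacle is the degree condition. Because the coloring is proper, any edge $uw$ must have $c(uw) \neq c(uv) = c$, and similarly $c(vw) \neq c$; hence the number of edges removed at each $w \neq u, v$ in forming $G'$ is exactly $[u \in N(w)] + [v \in N(w)] + [c \text{ is a color at } w]$, with no double counting. Only vertices $w$ with $d_G(w) \in \{\delta, \delta + 1\}$ can drop below degree $\delta - 1$. I will classify such $w$ by whether $w \in N(v)$ and by $d_G(w)$: a degree-$\delta$ vertex in $N(v)$ requires \emph{both} $u \notin N(w)$ and $c \notin \text{cols}(w)$, contributing at most $2\delta$ bad choices of $u$; a degree-$\delta$ vertex outside $N(v)$ requires that not both hold, contributing at most $\delta$; a degree-$(\delta+1)$ vertex in $N(v)$ contributes at most $\delta + 1$; and a degree-$(\delta + 1)$ vertex outside $N(v)$ imposes no constraint.

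The technical crux is then to show the union of all bad $u$'s is strictly smaller than $|N(v)| \geq 3\delta - 2$, since a naive sum of the per-vertex bounds would easily exceed this. I would handle this by double-counting pairs $(w, u)$ where $w$ is a low-degree vertex and $u \in N(v)$ is bad for $w$, exploiting that each color-class constraint at $w$ corresponds to a \emph{unique} edge at $v$ and that neighborhood and color constraints can overlap only when $u \in N(w) \cap N(v)$ is incident to an edge $ux$ with $x \neq w$, $c(ux) = c$. The order bound $|V(G)| > \tfrac{13}{2}\delta - \tfrac{23}{2} + \tfrac{41}{8\delta}$ forces most of $V(G)$ to lie outside $N[v]$, which in turn limits how many low-degree vertices can crowd $N(v)$ and how many colors used at $v$ can appear at them; combining these with the overlap observations should yield a good $u$.
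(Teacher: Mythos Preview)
Your approach has a genuine gap: there need not exist any neighbor $u$ of $v$ for which $\delta(G')\ge\delta-1$. The vague double-counting at the end cannot be completed, because the obstruction can already come purely from the color constraint, independently of adjacencies. Concretely, take $\delta=3$ and let $v$ have exactly the seven neighbors $u_1,\ldots,u_7$ with $c(vu_j)=j$; make each $u_i$ have degree~$3$, adjacent only to $v$ and to two private external vertices, with color sets
\[
\{1,2,3\},\ \{2,1,3\},\ \{3,4,5\},\ \{4,6,7\},\ \{5,6,7\},\ \{6,1,2\},\ \{7,1,2\}
\]
at $u_1,\ldots,u_7$ respectively. For any choice $u=u_j$, some $u_i$ with $i\neq j$ has $j\in\text{cols}(u_i)$; that $u_i$ then loses both its edge to $v$ and its color-$j$ edge, dropping to degree $1<\delta-1$. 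The external vertices can be given further edges (and the graph padded) so that the global minimum degree is $3$ and the order bound holds, so nothing in the hypotheses rescues you. Your closing claim that the order bound ``limits how many low-degree vertices can crowd $N(v)$'' is simply false: every neighbor of $v$ can have degree exactly $\delta$.

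The paper's argument avoids all of this by reversing the order of the two choices. Delete only $v$; since each remaining vertex loses at most one edge, $\delta(G-v)\ge\delta-1$, and the induction hypothesis (for minimum degree $\delta-1$) yields a rainbow matching $M$ of size $\delta-1$ in $G-v$. Now choose the edge at $v$ \emph{after} seeing $M$: at most $2(\delta-1)$ edges at $v$ meet $V(M)$ and at most $\delta-1$ carry a color of $M$, so among the $\ge 3\delta-2$ edges at $v$ at least one avoids both, and $M$ extends. The moral is that committing to $uv$ before knowing $M$ forces you to protect \emph{every} low-degree vertex simultaneously, whereas committing afterward requires only a single pigeonhole count.
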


\begin{proof}
Let $v$ be a vertex of maximum degree in $G$.
By induction, there is a rainbow matching $M$ of size $\delta-1$ in $G-v$.
Since $v$ is incident to at least $3\delta-2$ edges with distinct colors, there is an edge incident to $v$ that is not incident to any edge in $M$ and also has a color that does not appear in $M$.
Thus there is a rainbow matching of size $\delta$ in $G$.
\end{proof}

\begin{lemma}\label{colorclass}
If $G$ has a color class containing at least $2\delta-1$ edges, then $G$ has a rainbow matching of size $\delta$.
\end{lemma}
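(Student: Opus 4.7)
The plan is to peel one edge off the large monochromatic class of color $c$ and then apply the inductive hypothesis on $\delta$ to recover the remaining $\delta-1$ edges of a rainbow matching in the graph with color $c$ deleted.

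I would first observe that because $G$ is properly edge-colored, the hypothesized color class $C$ of size at least $2\delta-1$ is itself a matching, so it spans at least $4\delta-2$ vertices. I would then let $G' = G - E(C)$. Since each vertex of $G$ lies on at most one edge of $C$, removing $C$ drops every degree by at most $1$, giving $\delta(G') \geq \delta - 1$. If $\delta(G') = \delta$, I would delete a single additional edge incident to some vertex of degree $\delta$ in $G'$ to obtain a spanning subgraph $H$ with $\delta(H) = \delta - 1$; otherwise I would set $H = G'$. In either case $H$ is properly edge-colored, uses no edge of color $c$, and satisfies $|V(H)| = |V(G)|$.

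A one-line calculation verifies that $\frac{13}{2}\delta - \frac{23}{2} + \frac{41}{8\delta} > \frac{13}{2}(\delta-1) - \frac{23}{2} + \frac{41}{8(\delta-1)}$ for $\delta \geq 2$, so the order hypothesis of Theorem~\ref{main} holds for $H$ at the smaller parameter $\delta-1$. The inductive hypothesis then furnishes a rainbow matching $M'$ of size $\delta - 1$ in $H$; by construction none of its edges has color $c$.

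Finally, I would extend $M'$ by one edge of $C$. The matching $M'$ covers exactly $2\delta - 2$ vertices, and since $C$ is itself a matching each vertex of $V(M')$ meets at most one edge of $C$. Hence at most $2\delta - 2$ edges of $C$ are incident to $V(M')$, leaving at least $(2\delta-1) - (2\delta-2) = 1$ edge of $C$ vertex-disjoint from $M'$. Adding such an edge yields a rainbow matching of size $\delta$. No step is genuinely difficult here; the only place to be careful is the inductive arithmetic needed to apply the main theorem at parameter $\delta-1$, which is essentially automatic from the $\frac{13}{2}\delta$ leading term of the order bound.
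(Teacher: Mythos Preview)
Your proof is correct and follows essentially the same approach as the paper's: delete the large color class $C$, apply the inductive hypothesis on $\delta$ to obtain a rainbow matching of size $\delta-1$ in $G-C$, and then use the pigeonhole bound $|C|\ge 2\delta-1 > 2(\delta-1)$ to extend by an edge of $C$. Your additional care in forcing $\delta(H)=\delta-1$ when $\delta(G-C)=\delta$ and in checking the order inequality at parameter $\delta-1$ simply makes explicit what the paper leaves implicit.
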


\begin{proof}
Let $C$ be a color class with at least $2\delta-1$ edges.
By induction, there is a rainbow matching $M$ of size $\delta-1$ in $G-C$.
There are $2\delta-2$ vertices covered by the edges in $M$, thus one of the edges in $C$ has no endpoint covered by $M$, and the matching can be extended.
\end{proof}

The proof of Theorem~\ref{main} relies on the implementation of a greedy algorithm.
We begin by preprocessing the graph so that each edge is incident to at least one vertex with degree $\delta$.
To achieve this, we order the edges in $G$ and process them in order.
If both endpoints of an edge have degree greater than $\delta$ when it is processed, delete that edge.
In the resulting graph, every edge is incident to a vertex with degree $\delta$.
Furthermore, by Lemma~\ref{maxdeg} we may assume that $\Delta(G)\le 3\delta-3$; thus the degree sum of the endpoints of any edge is bounded above by $4\delta-3$.
After preprocessing, we begin the greedy algorithm.

In the $i$th step of the algorithm, a smallest color class is chosen (without loss of generality, color $i$), and then an edge $e_i$ of color $i$ is chosen such that the degree sum of the endpoints is minimum.
All the remaining edges of color $i$ and all edges incident with an endpoint of $e_i$ are deleted.
The algorithm terminates when there are no edges in the graph.

We assume that the algorithm fails to produce a matching of size $\delta$ in $G$; suppose that the rainbow matching $M$ generated by the algorithm has size $k$.
We let $R$ denote the set of vertices that are not covered by $M$.

Let $c_i$ denote the size of the smallest color class at step $i$.
Since at most two edges of color $i+1$ are deleted in step $i$ (one at each endpoint of $e_i$), we observe that $c_{i+1}+2\ge c_i$.
Otherwise, at step $i$ color class $i+1$ has fewer edges.
Let step $h$ be the last step in the algorithm in which a color class that does not appear in $M$ is completely removed from $G$.
It then follows that $c_h\le 2$, and in general $c_i\le 2(h-i+1)$ for $i\in [h]$.
Let $f_i$ denote the number of edges of color $i$ deleted in step $i$ with both endpoints in $R$.
Since $f_i< c_i$, we have $f_i\le 2(h-i)+1$ for $i\in[h]$.
Note that after step $h$, there are exactly $k-h$ colors remaining in $G$.
By Lemma~\ref{colorclass}, color classes contain at most $2\delta-2$ edges, and therefore the last $k-h$ steps remove at most $(k-h)(2\delta-2)$ edges.
Furthermore, for $i>h$, the degree sum of the endpoints of $e_i$ is at most $2(\delta-1)$.

For $i\in[h]$, let $x_i$ and $y_i$ be the endpoints of $e_i$, and let $d_i(v)$ denote the degree of a vertex $v$ at the beginning of step $i$.
Let $\mu_i = \max\{0,d_i(x_i)+d_i(y_i)-2\delta\}$; note that $2\delta\le 2\delta+\mu_i\le 4\delta-3$.
Thus, at step $i$, at most $2\delta+\mu_i+f_i-1$ edges are removed from the graph.
Since the algorithm removes every edge from the graph, we conclude that

\begin{equation}\label{ub}
|E(G)|\le (k-h)(2\delta-2)+\sum_{i=1}^{h} (2\delta+\mu_i+f_i-1).
\end{equation}

%\noindent Since $\mu_i+f_i\ge 0$, this bound is maximized when $h=k$ so we may assume that a color class that does not appear in $M$ is completely removed from $G$ at step $k$.
%Consequently, $f_i\le 2(k-i)+1$ for $i\in[k]$.
%Furthermore, we obtain the following simplified bound:
%
%\begin{equation}\label{ub}
%|E(G)|\le \sum_{i=1}^{k}(2\delta+\mu_i+f_i-1).
%\end{equation}

We now compute a lower bound for the number of edges in $G$.
Since the degree sum of the endpoints of $e_i$ in $G$ is at least $2\delta+\mu_i$, we immediately obtain the following inequality:
$$\frac{n\delta+\sum_{i\in[h]}\mu_i}{2}\le |E(G)|.$$

If $f_i>0$ and $\mu_i>0$, then there is an edge with color $i$ having both endpoints in $R$.
Since this edge was not chosen in step $i$ by the algorithm, the degree sum of its endpoints is at least $2\delta+\mu_i$, and one of its endpoints has degree at least $\delta+\mu_i$.
For each value of $i$ satisfying $f_i>0$, we wish to choose a representative vertex in $R$ with degree at least $\delta+\mu_i$.
Since there are $f_i$ edges with color $i$ with both endpoints in $R$, there are $f_i$ possible representatives for color $i$.
Since a vertex in $R$ with high degree may be the representative for multiple colors, we wish to select the largest system of distinct representatives.

Suppose that the largest system of distinct representatives has size $t$, and let $T$ be the set of indices of the colors that have representatives.
For each color $i\in T$ there is a distinct vertex in $R$ with degree at least $\delta+\mu_i$.
Thus we may increase the edge count of $G$ as follows:

\begin{equation}\label{lb}
\frac{n\delta+\sum_{i\in[h]}\mu_i+\sum_{i\in T}\mu_i}{2}\le |E(G)|.
\end{equation}

We let $\{f^\downarrow_i\}$ denote the sequence $\{f_i\}_{i\in[h]}$ sorted in nonincreasing order.
Since $f_i\le 2(h-i)+1$, we conclude that $f^\downarrow_i\le 2(h-i)+1$.
Because there is no system of distinct representatives of size $t+1$, the sequence $\{f^\downarrow_i\}$ cannot majorize the sequence $\{t+1,t,t-1,\ldots,1\}$.
Hence there is a smallest value $p\in[t+1]$ such that $f^\downarrow_p\le t+1-p$.
Therefore, the maximum value of $\sum_{i=1}^{h}f^\downarrow_i$ is bounded by the sum of the sequence $\{2h-1,2h-3,\ldots,2(h-p)+3,t+1-p,\ldots,t+1-p\}$.
Summing we attain
$$\sum_{i\in [h]}f_i\le (p-1)(2h-p+1)+(h-p+1)(t+1-p).$$
Over $p$, this value is maximized when $p=t+1$, yielding $\sum_{i\in[h]}f_i\le t(2h-t)$.
Since $h\le \delta-1$, we then have $\sum_{i\in[h]}f_i\le 2(\delta-1)t-t^2$.

We now combine bounds~\eqref{ub} and~\eqref{lb}:

$$\frac{n\delta+\sum_{i\in[h]}\mu_i+\sum_{i\in T}\mu_i}{2} \le (k-h)(2\delta-2)+\sum_{i=1}^{h} (2\delta+\mu_i+f_i-1).$$

\noindent Hence, since $k \leq \delta-1$,
\begin{align*}
\frac{n\delta}{2} &\le  (2\delta-1)(\delta-1)+\frac{1}{2}\sum_{[h]\setminus T}\mu_i + \sum_{i\in[h]}f_i\\
&\le (2\delta-1)(\delta-1)+(\delta-1-t)(\delta-3/2)+2(\delta-1)t-t^2\\
&\le 3\delta^2-\frac{11}{2}\delta+\frac{5}{2}+\left(\delta-\frac{1}{2}\right)t-t^2.
\end{align*}

\noindent This bound is maximized when $t=(\delta-\frac{1}{2})/2$.
Thus
$$n \le \frac{13}{2}\delta-\frac{23}{2}+\frac{41}{8\delta},$$
contradicting our choice for the order of $G$.
\end{proof}

\begin{proof}[Sketch of Proof of Theorem~\ref{trianglefree}]
When $G$ is triangle-free, Lemma~\ref{maxdeg} can be improved.
In particular, $\Delta(G)\le 2\delta-2$ since there is at most one edge joining a vertex of maximum degree to each edge in a matching of size $\delta-1$.
Since $\Delta(G)$ is used to bound the value of $\mu_i$ in the proof of Theorem~\ref{main}, the same argument yields the following inequality:
\begin{align*}
\frac{n\delta}{2} &\le
(2\delta-1)(\delta-1)+\frac{1}{2}\sum_{[h]\setminus T}\mu_i + \sum_{i\in[h]}f_i\\
&\le (2\delta-1)(\delta-1)+\frac{1}{2}(\delta-1-t)(\delta-2)+2(\delta-1)t-t^2\\
&\le \frac{5}{2}\delta^2-\frac{9}{2}\delta+2+\left(\frac{3}{2}\delta-1\right)t-t^2.
\end{align*}
This upper bound is maximized when $t=(\frac{3}{2}\delta-1)/2$, yielding
$$n\le \frac{49}{8}\delta-\frac{21}{2}+\frac{9}{2\delta}.$$
\end{proof}

\section{Conclusion}
The proof of Theorem~\ref{main} provides the framework of a $O(\delta(G)|V(G)|^2)$-time algorithm that generates a rainbow matching of size $\delta(G)$ in a properly edge-colored graph $G$.
Given such a $G$, we create a sequence of graphs $\{G_i\}$ as follows, letting $G=G_0$, $\delta=\delta(G)$, and $n=|V(G)|$.
First, determine $\delta(G_i)$, $\Delta(G_i)$, and the maximum size of a color class in $G_i$; this process takes $O(n^2)$-time.
If $\Delta(G_i)\le 3\delta(G_i)-3$ and the maximum color class has at most $2\delta(G_i)-2$ edges, then terminate the sequence and set $G_i=G'$.
If $\Delta(G_i)>3\delta(G_i)-3$, then delete a vertex $v$ of maximum degree and then process the edges of $G_i-v$, iteratively deleting those with two endpoints of degree at least $\delta(G_i)$; the resulting graph is $G_{i+1}$.
If $\Delta(G_i)\le 3\delta(G_i)-3$ but a maximum color class $C$ has at least $2\delta(G_i)-1$ edges, then delete $C$ and then process the edges of $G_i-C$, iteratively deleting those with two endpoints of degree at least $\delta(G_i)$; the resulting graph is $G_{i+1}$.
Note that $\delta(G_{i+1}) = \delta(G_i)-1$.
If this process generates $G_{\delta}$, we set $G'=G_\delta$ and terminate.
Generating the sequence $\{G_i\}$ consists of at most $\delta$ steps, each taking $O(n^2)$-time.

Given that $G'=G_i$, the algorithm from the proof of Theorem~\ref{main} takes $O(\delta n^2)$-time to generate a matching of size $\delta-i$ in $G'$.
The preprocessing step and the process of determining a smallest color class and choosing an edge in that class whose endpoints have minimum degree sum both take $O(n^2)$-time.
This process is repeated at most $\delta$ times.

A matching of size $\delta-(i+1)$ in $G_{i+1}$ is easily extended in $G_i$ to a matching of size $\delta-i$ using the vertex of maximum degree or maximum color class.
The process of extending the matching takes $O(\delta)$-time.
Thus the total run-time of the algorithm generating the rainbow matching of size $\delta$ in $G$ is $O(\delta n^2)$.\\

It is worth noting that the analysis of the greedy algorithm used in the proof of Theorem~\ref{main} could be improved.
In particular, the bound $c_{i+1}\ge c_i-2$ is sharp only if at step $i$ there are an equal number of edges of color $i$ and $i+1$ and both endpoints of $e_i$ are incident to edges with color $i+1$.
However, since one of the endpoints of $e_i$ has degree at most $\delta$, at most $\delta-1$ color classes can lose two edges in step $i$.
Since the maximum size of a color class in $G$ is at most $2\delta-2$, if $G$ has order at least $6\delta$, then there are at least $3\delta/2$ color classes.
Thus, for small values of $i$, the bound $c_i\le 2(k-i+1)$ can likely be improved.
However, we doubt that such analysis of this algorithm can be improved to yield a bound on $|V(G)|$ better than $6\delta$.


\begin{thebibliography}{9}
\frenchspacing

\bibitem{BR}
R. A. Brualdi and H. J. Ryser,
Combinatorial Matrix Theory,
Cambridge University Press, Cambridge, UK, 1991.

\bibitem{Ryser}
H. J. Ryser,
Neuere Probleme der Kombinatorik, in ``Vortr$\ddot{\text{a}}$ge $\ddot{\text{u}}$ber Kombinatorik Oberwolfach''.
Mathematisches Forschungsinstitut Oberwolfach, July 1967, 24-29.

\bibitem{KY}
A. Kostochka and M. Yancey,
Large Rainbow Matchings in Edge-Colored Graphs.
In Preparation.


\bibitem{LSWW}
T. D. LeSaulnier, C. Stocker, P. S. Wenger, and D. B. West,
Rainbow matching in edge-colored graphs.
{\it Electron. J. Combin.} 17 (2010), Note \#N26.


\bibitem{Stein}
S. K. Stein,
Transversals of Latin squares and their generalizations.
{\it Pacific J. Math.} 59 (1975), no. 2, 567-575.


\bibitem{Wang}
G. Wang,
Rainbow matchings in properly edge colored graphs.
{\it Electron. J. Combin.} 18 (2011), Paper \#P162.

\bibitem{WL}
G. Wang and H. Li,
Heterochromatic matchings in edge-colored graphs.
{\it Electron. J. Combin.} 15 (2008), Paper \#R138.


\end{thebibliography}
\end{document}